\newtheorem{thm}{Theorem}[section]
\newtheorem{prop}[thm]{Proposition}
\newtheorem{cor}[thm]{Corollary}
\newcommand{\theoremname}{Theorem:}
\newtheorem*{conj*}{Conjecture}
  \theoremstyle{definition}
  \newtheorem{claim}[thm]{Claim}
  \newtheorem*{claim*}{Claim}
  \newtheorem*{question*}{Question}
  \newtheorem*{answer*}{Answer}
  \newtheorem*{application*}{Application}
  \theoremstyle{remark}
  \newtheorem*{rmk*}{Remark}
\newcommand{\M}{\mathcal{M}}
\newcommand{\rpt}{\mathbb{R}\text{P}^2}
\title{Large 1-systems of Curves in non-orientable surfaces}
\begin{document}

\author{Sarah Ruth Nicholls}
\address{Wake Forest University}
\email{sarahruthnicholls@gmail.com}

\author{Nancy Scherich}
\address{University of Toronto}
\email{nancy.scherich@gmail.com}
\urladdr{\url{http://www.nancyscherich.com}}

\author{Julia Shneidman}
\address{Rutgers Univeristy}
\email{julshn16@gmail.com}

\maketitle
\begin{abstract}

     A longstanding avenue of research in orientable surface topology is to create and enumerate collections of curves in surfaces with certain intersection properties. We look for similar collections of curves in non-orientable surfaces. A surface is \emph{non-orientable} if and only if it contains a Möbius band.  We generalize a construction of Malestein-Rivin-Theran to non-orientable surfaces to exhibit a lower bound for the maximum number of curves that pairwise intersect 0 or 1 times in a generic non-orientable surface.

\end{abstract}

\section{Introduction}

The study of surfaces is often divided into two genres; orientable and non-orientable, with orientable surfaces being the far more studied and intuitive of the two.
A surface is non-orientable if and only if it contains a Möbius band. 
The Klein bottle and the real projective plane are basic examples of non-orientable surfaces, and a generic compact connected non-orientable surface is homeomorphic to a connect sum of $g$ real projective planes with $b$ boundary components, denoted $N_{g,b}$.
Many surface attributes do not depend on orientability, such as Euler Characteristic, while other properties of orientable surfaces are still true for non-orientable surfaces but require a different method of proof or a tweaked definition.
For example, the pants complex of non-orientable surfaces is connected, analogous to the orientable case, however there is a larger set of generating moves to relate two non-orientable pants decompositions \cite{pantspaper}.

A longstanding avenue of research in orientable surface topology is to create and enumerate collections of curves in surfaces with certain intersection properties.
A \textit{$k$-system} of curves in a surface is a collection of homotopy classes of simple, non-peripheral,  essential, pairwise non-homotopic curves that pairwise intersect at most $k$ times. 
For two simple closed curves $\alpha$ and $\beta$, the (geometric) \emph{intersection number} is the minimum of $|a\cap b|$ over all curves $a$ homotopic to $\alpha$  and $b$ homotopic to $\beta$. 
The work of Juvan-Malnic-Mohar \cite{JMM}, and Aougab \cite{T2012} show upper and lower bounds for the maximal size of a $k$-system in an orientable surface.
Greene \cite{G}, Aougab \cite{T}, and Malestein-Rivin-Theran \cite{MRT} more specifically found upper and lower bounds for the maximal size of a $1$-system in orientable surfaces.
It is natural to ask what the analogous bounds are for systems of curves in non-orientable surfaces.
In this paper we generalize the construction of Malestein-Rivin-Theran to find a large $1$-system in a generic compact connected non-orientable surface, $N_{g,b}$.\\

\noindent \textbf{Theorem A.} \textit{There exists a 1-system of curves, $\Gamma$, in $N_{g,b}$ with }
\begin{align*}
    |\Gamma|&\geq\frac{1}{3}g^2+\frac{5}{18}g-1 +\frac{b}{3}(g-2).
\end{align*}

\vspace{5mm}

This result gives the tightest known lower bound for the maximal size of a $1$-system in a generic compact connected non-orientable surface.
One major difference between curves in orientable versus non-orientable surfaces is that non-orientable surfaces contain \emph{1-sided curves}. A curve is \emph{1-sided} if it is the core of a Möbius band.
Altering the construction of the 1-system in Theorem A, we create a large 1-system of only 1-sided curves.\\

\noindent \textbf{Theorem B.} \textit{
There exists a 1-system, $\Omega$, of 1-sided curves in $N_{g,b}$ with }

$$|\Omega|=(g-2\lfloor \frac{g}{4}\rfloor)(2\lfloor \frac{g}{4}\rfloor+2)\geq \frac{1}{4}(g^2+3g+2).$$

\vspace{5mm}

The organization of this paper is as follows.
 Section 2 consists of background information on non-orientable surfaces and conventions for curves in non-orientable surfaces. 
In Section 3, we give intuition about curves in non-orientable surfaces and specifically address how to apply the change of coordinates principle.
Lastly, Section 4 contains the constructions of the 1-systems in Theorems A and B.\\

\noindent \textbf{Acknowledgements.} This paper is a result of a 2021 Georgia Institute of Technology REU project mentored by the second author and directed by Dan Margalit. We would like to thank Dan Margalit, Wade Bloomquist and Yvon Verberne for helpful conversations. This project was partially funded by NSF Grant DMS-1851843 and NSERC Grant RGPIN-2018-04350.

\section{Background on non-orientable surfaces}

A  surface is \emph{non-orientable} if it is impossible to make a consistent choice of normal a vector at every point on the surface (it is not possible to consistently tell the difference between clockwise and counterclockwise). 
A convenient restatement is that a surface is non-orientable if and only if it contains a Möbius band. 
Figure \ref{fig:exampleSurfaces} shows the basic examples of three non-orientable surfaces: the Möbius band, $\mathcal{M}$, real projective plane, $\rpt$, and Klein bottle, $\mathcal{K}$.

\begin{figure}[ht]
    \centering
    \begin{picture}(200,40)
    \put(10,0){\includegraphics[scale=.11]{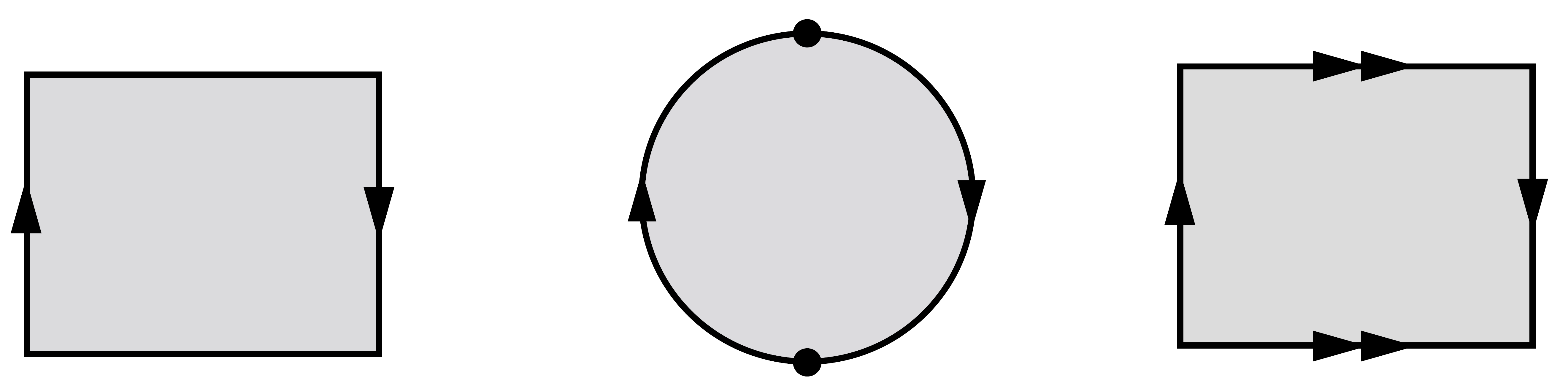}}
    \put(25,15){$\mathcal{M}$}
     \put(87,15){$\rpt$}
    \put(150,15){$\mathcal{K}$}
    \end{picture}
    \caption{The polygonal decompositions for Möbius band, real projective plane,  and Klein bottle.}
    \label{fig:exampleSurfaces}
\end{figure}

$N_{g,b}$ denotes the non-orientable surface that is the connect sum of $g$ real projective planes and has $b$ boundary components. 
From the classification of surfaces, every closed, compact, connected, non-orientable surface is homeomorphic to  $N_{g,b}$ for some $g,b\in \mathbb{Z}$ (if $b=0$ it is often omitted). 
As tori are the basic building blocks to construct orientable surfaces, $\rpt$'s are the basic building blocks to construct non-orientable surfaces.

 The following basic facts can be proved using cut-and-paste arguments:
\begin{enumerate} 
\item $N_{1,1}\cong \rpt - D^2 \cong \mathcal{M}$; the Möbius band is homeomorphic to $\rpt$ minus a disc. See Figure \ref{fig:projective plane minus disk}.
\item $N_{2,0}\cong \rpt \# \rpt \cong \mathcal{K}$; the Klein bottle is homeomorphic to $\rpt$ connect sum $\rpt$.
\item $N_{3,0}\cong \rpt \# \rpt \# \rpt \cong \mathcal{K} \# \rpt \cong S_{1,0} \# \rpt$; the Klein bottle connect sum $\rpt$ is homeomorphic to the torus connect sum $\rpt$. 

\end{enumerate}

Fact (3) says that in the presence of an $\rpt$, two non-orientable genus can be traded for one orientable genus.
It follows that all closed, compact, connected, non-orientable surfaces are homeomorphic to an orientable surface connect sum some $\rpt$'s: $$N_{g,b}\cong S_{k,b} \#^{g-2k} \rpt  \text{ for any $k<\frac{g-1}{2}$}.$$

Since $\rpt$ minus a disc is the Möbius band, connect summing with $\rpt$ can be thought of as gluing in a Möbius band along its circular boundary.
A Möbius band glued to a surface is called a \emph{cross-cap} and is drawn on the surface as $\otimes$, see Figure \ref{fig:projective plane minus disk}.

\begin{figure}
\centering
\begin{picture}(340,45)
   \put(0,0){
\includegraphics[scale=.2]{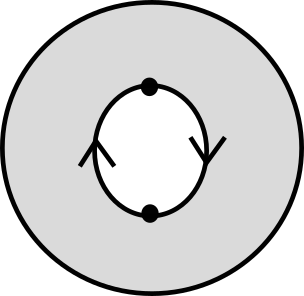}\hspace{5mm}}
\put(85,0){\includegraphics[scale=.2]{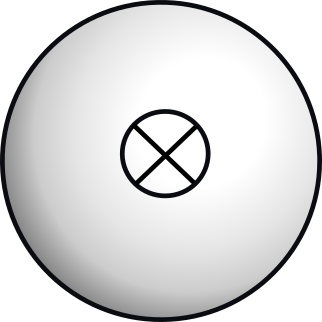}\hspace{5mm}}
\put(170,0){\includegraphics[scale=.2]{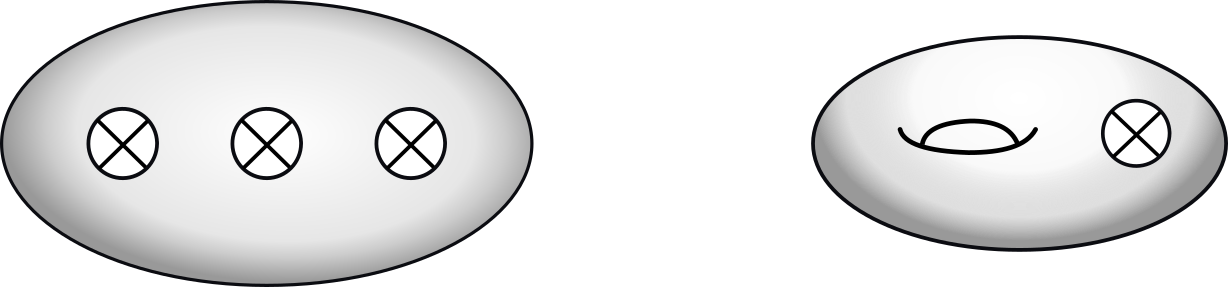}}
\put(-7,0){(a)}
\put(75,0){(b)}
\put(160,0){(c)}
\put(263,17){\huge $\cong$}
\end{picture}
\caption{(a) A depiction of $\M$ as $\rpt$ minus a disc, also called a cross-cap. (b) $\rpt$ drawn as a cross-cap on a sphere. (c) Two depictions of $N_3$.}
\label{fig:projective plane minus disk}
\end{figure}

\subsection{Conventions  for curves in non-orientable surfaces.}
All curves in this paper are closed loops in the surface and are considered up to homotopy.
A curve is \emph{simple} if it has no self-intersections, and \emph{non-peripheral} if it is not homotopic to a boundary
component.
A curve is \emph{2-sided} if it is the core of an annulus.
In orientable surfaces every curve is 2-sided.
 In non-orientable surfaces it is possible for a curve to be the core of a Möbius band and we call such curves \textit{1-sided}.
 A curve in a surface is \emph{essential} if it is not null-homotopic and not homotopic to the boundary of a Möbius band.
For example, a curve that encircles a cross-cap is considered a non-essential 2-sided curve.
For a curve to be 1-sided, it must pass through an odd number of cross-caps, see Section \ref{sec:CurvesInM}.
A curve that passes through an even number of cross-caps is 2-sided.

A \textit{$k$-system} of curves in a surface is a collection of homotopy classes of simple, non-peripheral,  essential, pairwise non-homotopic curves that pairwise intersect at most $k$ times. 
For two simple, closed curves $\alpha$ and $\beta$, the (geometric) \emph{intersection number} is the minimum of $|a\cap b|$ over all curves $a$ homotopic to $\alpha$  and $b$ homotopic to $\beta$.

 One reason for considering the boundary of a Möbius band to be non-essential is for the purposes of a pants decomposition. 
Every maximal 0-system of curves defines a pants decomposition. 
 If a curve in such a system bounds a Möbius band,  a connected component would be a Möbius band which cannot be further broken down into a pair of pants.

\section{How many curves are in non-orientable surfaces?}

In an orientable surfaces of genus greater than 0, up to homotopy, there are infinitely many different non-separating, essential, simple, closed curves, but 
up to homeomorphism, there is only one non-separating, essential, simple, closed curve.
That is, the change of coordinates principle states that there is a homeomorphism of the surfaces that maps any non-separating, essential, simple, closed curve to any other non-separating, essential, simple, closed curve. 

In non-orientable surfaces, the situation is more complicated. For example, 
 there is no homeomorphism of the surface that maps a 1-sided curve to a 2-sided curve.
However, we can ask if there is a change of coordinates so that any 1-sided curve is mapped to any other 1-sided curve. 
As it turns out, if a surface has non-orientable genus greater than or equal to 3, the answer is no!
Cutting a non-orientable surface along an essential, non-separating, simple, closed curve, can result in a non-orientable or, surprisingly, a non-orientable surface.
This phenomenon does not happen in orientable surfaces as there is no way to introduce non-orientability by cutting, but it is possible to ``cut out" non-orientability, which we show in Proposition \ref{prop:111Curve}.

From the perspective of homotopy as well as change of coordinates,  how many curves are in non-orientable surfaces?
We start with small non-orientable genus surfaces and work our way up to generic surfaces.

\subsection{Curves in the Möbius band.}\label{sec:CurvesInM}

There are two homotopy classes of simple, closed, essential curves in $\mathcal{M}$; the 1-sided core curve, and the 2-sided boundary curve. 
This fact is a corollary of the following Proposition \ref{prop:OneCurveInM}, which was proven by Epstein \cite{E} and Paris \cite{P}, and we provide a brief proof here.

\begin{prop}\label{prop:OneCurveInM}
Given any two points $x_1,x_2$ on the boundary of the Möbius band,  there is exactly one homotopy class of essential arcs connecting $x_1$ and $x_2$.
\end{prop}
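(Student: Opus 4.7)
The plan is a cut-and-paste argument. Given any simple essential arc $\alpha$ from $x_1$ to $x_2$, an Euler characteristic computation gives $\chi(M\setminus\alpha)=\chi(M)+1 = 1$; since $\alpha$ is essential and hence non-separating, the surface $D := M\setminus\alpha$ is connected, has $\chi(D)=1$, and has non-empty boundary, forcing $D$ to be a disk. The boundary circle $\partial D$ decomposes into four arcs: two copies $\alpha^+,\alpha^-$ of $\alpha$ (joining duplicates $x_1^+$ to $x_2^+$ and $x_1^-$ to $x_2^-$, where $x_i^\pm$ denote the two duplicates of $x_i$ on $\partial D$), together with the two subarcs of $\partial M$ connecting $x_1$ and $x_2$.

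Given a second simple essential arc $\beta$ from $x_1$ to $x_2$, I would first put $\alpha$ and $\beta$ in minimal position rel endpoints. A bigon-style argument, adapted to arcs with endpoints on $\partial M$, shows that in minimal position $\alpha\cap\beta\subseteq\{x_1,x_2\}$: any interior intersection would produce an innermost disk in $M$ bounded by subarcs of $\alpha$, $\beta$, and possibly $\partial M$, yielding an isotopy that strictly reduces $|\alpha\cap\beta|$ and contradicting minimality. Hence $\beta$ embeds in $D$ as an arc between some copies of $x_1$ and $x_2$ on $\partial D$.

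Since $D$ is a disk, any properly embedded arc with prescribed boundary endpoints is isotopic rel endpoints to the unique subarc of $\partial D$ joining those endpoints. If $\beta$ connects same-side copies (either $x_1^+$ to $x_2^+$ or $x_1^-$ to $x_2^-$), then $\beta\simeq\alpha^\pm$ in $D$, which projects back to $\alpha\simeq\beta$ in $M$, as desired. If instead $\beta$ connects opposite-side copies, then $\beta$ is isotopic in $D$ to one of the two $\partial M$-subarcs, so $\beta$ is boundary-parallel in $M$, contradicting the assumption that $\beta$ is essential. Either way, $\alpha$ and $\beta$ are homotopic.

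The main obstacle is the bigon step in this setting. The classical bigon criterion is stated for simple closed curves on closed surfaces, whereas here we have arcs whose endpoints lie on $\partial M$ and whose ambient surface is non-orientable. The adaptation requires considering both true bigons and ``half-bigons'' with a side on $\partial M$, and running the innermost-disk argument inside $M$; orientability is not essential for the innermost step since the innermost region is itself a disk, but one must verify carefully that such an innermost region always exists and can be exploited to reduce the intersection number. Once this is in place, the remaining boundary-of-$D$ case analysis is routine.
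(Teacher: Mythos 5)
Your approach is a genuinely different route from the paper's. The paper fixes a polygonal decomposition of $\mathcal{M}$ with both points on one edge, explicitly enumerates the simple arcs by winding number (0, 1, or 2 wraps, with 3 or more forced to self-intersect), and then disposes of the 0- and 2-wrap arcs by an explicit cut-and-paste showing they are boundary-parallel. You instead cut along the given essential arc $\alpha$, identify the complement as a disk via Euler characteristic, and analyze any second essential arc $\beta$ inside that disk. The Euler-characteristic computation is correct ($\chi(\mathcal{M}\setminus\alpha)=1$), your observation that an essential arc in $\mathcal{M}$ is automatically non-separating (a separating arc would cut off a disk and be boundary-parallel) is correct, and the final case analysis on which duplicates of $x_1,x_2$ the arc $\beta$ joins is also correct: because of the Möbius twist, the two boundary arcs $\delta_1,\delta_2$ of $\partial\mathcal{M}$ appear on $\partial D$ joining opposite-sign copies $x_1^\pm$ to $x_2^\mp$, so ``opposite-side'' really does force $\beta$ to be boundary-parallel in $D$ and hence in $\mathcal{M}$.

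The genuine gap is the step you flag yourself: the claim that in minimal position $\alpha\cap\beta\subseteq\{x_1,x_2\}$. The bigon criterion (for arcs, including half-bigons along $\partial\mathcal{M}$) only says that two arcs \emph{not} in minimal position bound a bigon or half-bigon; it does not say that two arcs in minimal position are disjoint. In general, two simple arcs with common endpoints in minimal position can still cross (think of arcs of different slopes on a surface). So ``any interior intersection would produce an innermost disk, contradicting minimality'' is circular as written — you would first need to know that the minimal geometric intersection number is zero, which is essentially the content of the proposition. To close the gap, one should argue directly in the disk $D$: the arcs of $\beta\cap D$ are a disjoint family of properly embedded arcs, and one must show that an outermost one either cuts off a honest bigon with a subarc of one copy $\alpha^\pm$ (allowing a reduction, contradicting minimality) or else cuts off, together with a subarc of $\partial D$ passing through a $\delta_j$, a disk witnessing that a subarc of $\beta$ is boundary-parallel in $\mathcal{M}$ — and since $\beta$ only meets $\partial\mathcal{M}$ at its endpoints, this forces all of $\beta$ to be boundary-parallel, contradicting essentiality. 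That outermost-arc case analysis, keeping track of the cyclic order $x_1^+,\alpha^+,x_2^+,\delta,x_1^-,\alpha^-,x_2^-,\delta$ on $\partial D$, is exactly where the twist of the Möbius band enters, and it is the part your write-up asserts but does not carry out.
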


\begin{proof}

Choose two points, $x_1$ and $x_2$, on the boundary circle of $\M$ and a 
 polygonal decomposition of $\M$ so that the two points are on the same side of the polygon. Up to homotopy, there are three ways for an arc to connect $x_1$ to $x_2$ by wrapping 0, 1 or 2 times around the Möbius band, as shown in Figure \ref{fig:three curves}(a)-(c). 
 As Figure \ref{fig:three curves}(d) demonstrates, an arc wrapping around the Möbius band 3 or more times must have a self-intersection. 
 The arc in Figure \ref{fig:three curves}(a) is clearly boundary parallel, and the arc in Figure \ref{fig:three curves}(c) is shown to be boundary parallel by the cut-and-paste construction in Figure \ref{fig:three curves}(e). 
 Therefore up to homotopy, the arc in Figure \ref{fig:three curves}(b) is the only essential arc.
 
 \end{proof}

 \begin{figure}[h]
\centering
  \begin{picture}(280,75)
\put(0,50){\adjincludegraphics[scale=.3,trim={0 0 {.5\width} 0},clip]{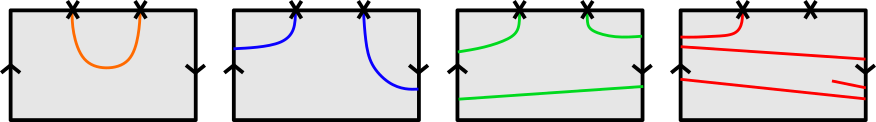}}
\put(0,5){\adjincludegraphics[scale=.3,trim={{.5\width} 0 0 0},clip]{mobiusstripcurves.png}}

\put(150,0){\includegraphics[scale=.3]{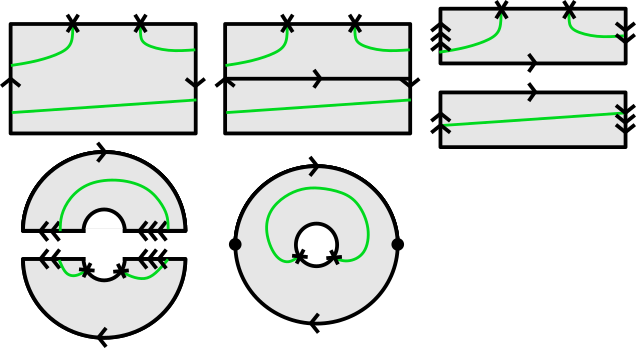}}
\put(18,40){(a)}
\put(65,40){(b)}
\put(18,-5){(c)}
\put(65,-5){(d)}
\put(145,-5){(e)}
\end{picture}
\caption{Arcs in a Möbius band.}
\label{fig:three curves}
\end{figure}

\begin{cor}
 Up to homotopy, there is only one way for a curve to pass through a cross-cap.
\end{cor}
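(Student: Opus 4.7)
The plan is to reduce this directly to Proposition \ref{prop:OneCurveInM}. Recall that a cross-cap on a surface is a Möbius band glued in along its boundary circle $\partial \M$. A curve $\gamma$ in the surface ``passes through'' the cross-cap precisely when, after being put in minimal position with respect to $\partial \M$, it enters $\M$ at some point $x_1 \in \partial \M$ and exits at some point $x_2 \in \partial\M$. Thus the portion of $\gamma$ lying inside the Möbius band is an arc $\alpha \subset \M$ with endpoints $x_1, x_2 \in \partial \M$.

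First I would argue that $\alpha$ must be essential in $\M$: if $\alpha$ were homotopic (rel endpoints) to an arc in $\partial\M$, then $\gamma$ could be homotoped off the cross-cap entirely, contradicting the assumption that $\gamma$ passes through it. Applying Proposition \ref{prop:OneCurveInM} then gives that, for this fixed pair of endpoints $x_1,x_2$, the arc $\alpha$ is unique up to homotopy rel endpoints inside $\M$.

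Next I would handle the dependence on the endpoints. Because $\partial\M$ is a circle embedded in the ambient surface and is 2-sided (as the boundary of a Möbius band, it bounds a collar), any two points of $\partial\M$ can be connected by an arc along $\partial\M$, and sliding an endpoint of $\gamma$ along such an arc is a homotopy in the ambient surface. So given any two curves $\gamma_1,\gamma_2$ passing through the cross-cap, one can homotope them so that they enter and exit at the same pair of points, at which point the previous paragraph finishes the argument.

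The only real subtlety is the initial claim that any essential traversal of the cross-cap corresponds to an essential arc in $\M$; this is mild and follows from the fact that an arc isotopic rel endpoints into $\partial\M$ can be pushed off the cross-cap in the ambient surface. Everything else is an immediate invocation of Proposition \ref{prop:OneCurveInM} plus a homotopy sliding endpoints along $\partial \M$.
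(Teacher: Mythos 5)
Your argument is correct and is exactly the intended deduction: the paper states this corollary without an explicit proof because it is meant to follow immediately from Proposition \ref{prop:OneCurveInM}, and you have filled in that reduction cleanly (restrict the curve to the arc it traces in the glued-in Möbius band, check that arc is essential, invoke the proposition, and dispose of the choice of endpoints by sliding along $\partial\M$). The endpoint-sliding step and the observation that an inessential arc lets you push the curve off the cross-cap are the right supporting details, and nothing you wrote conflicts with the paper's treatment.
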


On a surface with a cross-cap, the core curve of the Möbius band passes through the cross-cap once and is shown in Figure \ref{fig:CrossCapVisualization}(a). This curve is called \emph{the core curve of the cross-cap}.
In a neighborhood of a cross-cap, all curves look the same; they enter and exit the cross-cap at antipodal points, as shown in Figure \ref{fig:CrossCapVisualization}(b). 
 Many curves can pass through the same cross-cap and these curves remain disjoint throughout a neighborhood of the cross-cap.

\begin{cor}\label{cor:1sidedEssential}
 Any curve that passes through a cross-cap exactly once is essential.
\end{cor}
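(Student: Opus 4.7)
The plan is to deduce essentiality from the stronger property of being 1-sided. A curve $\gamma$ passing through exactly one cross-cap crosses an odd number of cross-caps in total, so by the observation preceding the statement a regular neighborhood of $\gamma$ is a Möbius band; that is, $\gamma$ is 1-sided. The task then reduces to ruling out the two ways a curve can fail to be essential: being null-homotopic, and being homotopic to the boundary of a Möbius band.

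The key ingredient I would invoke is that 1-sidedness is a free homotopy invariant. The cleanest justification goes through the orientation double cover $\widetilde{N} \to N_{g,b}$: a loop lifts to a closed loop in $\widetilde{N}$ if and only if its regular neighborhood in $N_{g,b}$ is an annulus rather than a Möbius band, and this lifting property depends only on the free homotopy class of the loop. Equivalently, the parity of cross-cap crossings equals the evaluation on $[\gamma]$ of the homomorphism $\pi_1(N_{g,b}) \to \mathbb{Z}/2$ given by the first Stiefel--Whitney class of $N_{g,b}$.

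Given this invariance, the two exclusions are immediate. A null-homotopic simple closed curve bounds a disk, which is orientable, so it is 2-sided; hence $\gamma$ cannot be null-homotopic. Similarly, the boundary of an embedded Möbius band $B \subset N_{g,b}$ admits an annular collar neighborhood, with one half lying in $B$ and the other in the complement, so it is 2-sided; hence $\gamma$ is not homotopic to such a boundary either. The main obstacle is establishing the homotopy invariance of 1-sidedness; once that is in hand, essentiality follows in two short steps, and I would expect the authors' proof to be even more direct, perhaps via a cut-and-paste argument in the spirit of Proposition \ref{prop:OneCurveInM} that produces a concrete homotopy obstruction.
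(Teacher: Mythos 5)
Your proof is correct, but it takes a genuinely different route from the paper's. The paper argues via intersection theory: a curve $\gamma$ passing through a cross-cap once intersects the core curve of that cross-cap exactly once, a single transverse intersection point cannot form a bigon, so by the bigon criterion the geometric intersection number $i(\gamma,\text{core})=1$. Since a null-homotopic curve has geometric intersection number zero with every curve, and (implicitly) the boundary of an embedded Möbius band is separating and therefore has even intersection number with every curve, $\gamma$ can be neither. Your proof instead promotes $\gamma$ to a 1-sided curve, invokes the free-homotopy invariance of 1-sidedness (via the orientation double cover, equivalently the first Stiefel--Whitney class), and then observes that both null-homotopic curves and Möbius-band boundaries are 2-sided. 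Your route makes the ``not homotopic to a Möbius boundary'' case explicit and conceptually transparent, whereas the paper's one-line appeal to the bigon criterion leaves that half of essentiality implicit; on the other hand, the paper's argument is more self-contained, needing only the bigon criterion rather than the machinery of the orientation double cover or $w_1$. Both approaches are sound.
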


\begin{proof}
Let $\gamma$ be any curve that passes through a cross-cap once. Then $\gamma$ intersects the core curve of the cross-cap exactly once. 
By the bigon criterion, $\gamma$ must be essential.
\end{proof}

 \begin{figure}[h]
    \centering
    \begin{picture}(400,50)
    \put(0,0){\includegraphics[scale=.3]{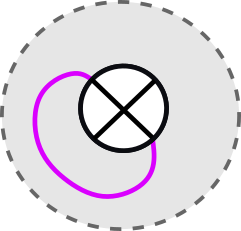}}
    \put(-7,0){(a)}
\put(80,0){\includegraphics[scale=.3]{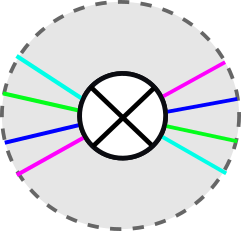}}
\put(72,0){(b)}
   \put(165,5){\adjincludegraphics[scale=.5,trim={0 0 {.70\width} 0},clip]{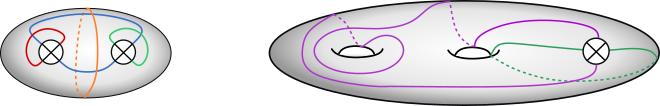}}
   \put(152,0){(c)}
     \put(245,5){\adjincludegraphics[scale=.5,trim={{.35\width} 0 0  0},clip]{CurvesinKleinandBig.png}}
     \put(240,0){(d)}
\end{picture}
    \caption{(a) The core curve of a cross-cap. (b) Multiple curves remaining disjoint as they pass through a cross-cap. (c) The 4 homotopy classes of curves in Klein bottle that do not bound discs. (d) Two disjoint curves in $N_5$.}
    \label{fig:CrossCapVisualization}
\end{figure}

\subsection{Curves in $\rpt$ and the Klein bottle}
Both $\rpt$ and the Klein bottle have small orientable genus which limits the number of curves in the surface. In $\rpt$, the core curve of the cross-cap is the only essential, simple, closed curve up to homotopy. 
In the Klein bottle, there are exactly three simple, closed, essential homotopy classes of curves, and one non-essential curve as the boundary of the cross-cap \cite{L,P}. These curves are shown in Figure \ref{fig:CrossCapVisualization}(c).
Unlike $\rpt$, the Klein bottle has an essential 2-sided curve, however, Dehn twisting along this curve does not generate any new curves.

\subsection{Curves in $N_{g\geq 3}$.}
When $g\geq 3$, there are infinitely many homotopy classes of essential, simple, closed curves in $N_g$, see Figure \ref{fig:CrossCapVisualization}(d) for example curves in $N_5$. There exists separating and non-separating, essential, 2-sided curves, as in Figure \ref{fig:SeparatingVsNon}(a)(b). The mapping class group of a compact non-orientable surface is generated by Dehn twists and a second type of map, called a \emph{cross-cap slide}, which is depicted in Figure \ref{fig:SeparatingVsNon}(c)  \cite{P, S, L63}.

\begin{figure}[h]
    \centering
    \begin{picture}(320,80)
   \put(0,0){\includegraphics[scale=.6]{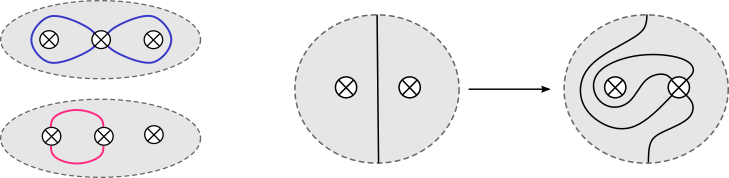}}
   \put(-9,0){(b)}
   \put(-9,45){(a)}
   \put(135,0){(c)}
   \end{picture}
    \caption{(a) A separating 2-sided curve in $N_{g\geq 3}$. (b) A non-separating 2-sided curve in $N_{g\geq 3}$. (c) Cross-cap slide}
    \label{fig:SeparatingVsNon}
\end{figure}

\subsection{ When are two curves related by a change of coordinates?}
A curve can be mapped to another curve if the resulting surface after cutting along one curve is homeomorphic to the result of cutting along the other curve.
By cutting along a 2-sided curve, the resulting surface has two boundary components and can be connected or disconnected.
By cutting along a 1-sided curve, the result is always connected and has only one boundary component.

Consider $N_{2,1}$, the Klein bottle with 1 boundary component. Let $\alpha$ be the 2-sided curve that passes once through each cross-cap. 
As shown in Figure \ref{fig:CuttingCurveInK}(a), cutting along $\alpha$ results in a pair of pants -- which is, unexpectedly, orientable.

\begin{figure}[ht]
\centering
\begin{picture}(320,110)
\put(0,0){\includegraphics[scale=.3]{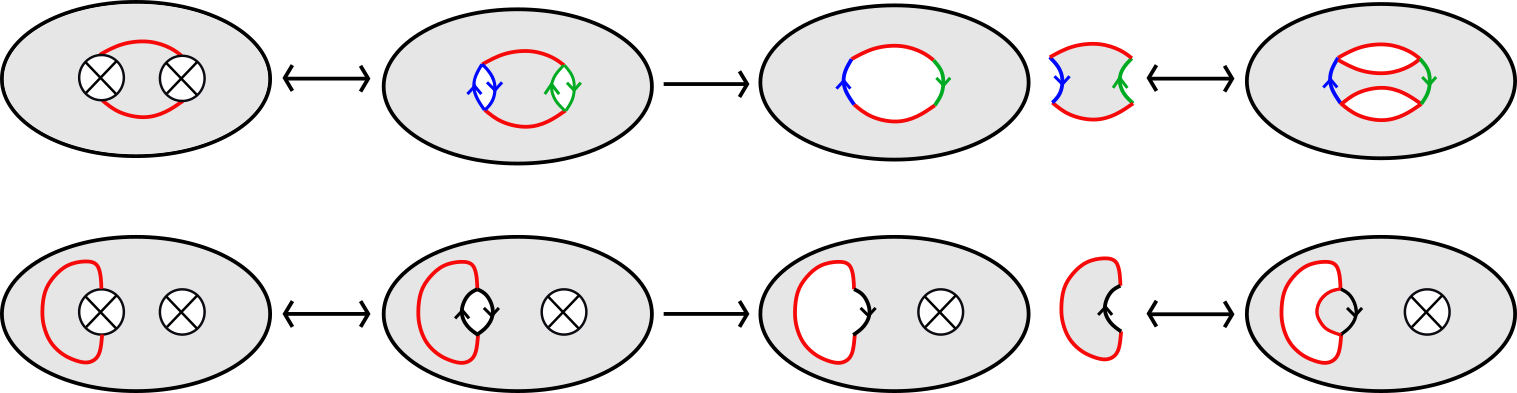}}

\put(-10,0){(b)}
\put(-10,55){(a)}
\put(30,56){$\alpha$}
\put(2,15){$\beta$}
\put(150,75){cut}
\put(150,25){cut}
\end{picture}
\caption{(a) Cutting along $\alpha$. (b) Cutting along $\beta$.}
\label{fig:CuttingCurveInK}
\end{figure}

To contrast, let $\beta$ be the 1-sided curve that is the core curve of the left cross-cap, as in Figure \ref{fig:CuttingCurveInK}(b). Cutting along $\beta$ turns the left cross-cap into a boundary component--in essence this cuts out the cross-cap.
The resulting surface is homeomorphic to $\rpt$ with two boundary components and is non-orientable.
The reason that cutting along $\alpha$ yields an orientable surface, where cutting along $\beta$ yields a non-orientable surface, was \emph{not} due to the fact that the curves were 2-sided versus 1-sided. 
We could easily tell that cutting along $\beta$ yields a non-orientable surface because there is a cross-cap completely disjoint from $\beta$.
Since $\alpha$ passes through every cross-cap of the surface it is more difficult to determine the orientability of the surface after cutting along $\alpha$.

\begin{prop}\label{prop:ChangeOfCoor}
In $N_g$, there is a change of coordinates that maps any 1-sided, essential, simple, closed curve that does not intersect every cross-cap to any other 1-sided, essential, simple, closed curve that does not intersect every cross-cap. There is a change of coordinates that maps any non-separating, essential, simple, closed 2-sided curve that does not intersect every cross-cap to any other non-separating, essential, simple, closed 2-sided curve that does not intersect every cross-cap.  
\end{prop}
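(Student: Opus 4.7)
The plan is to reduce both statements to the classical change of coordinates principle by cutting along the curves and identifying the homeomorphism type of the resulting cut surface. The guiding observation is that if $\alpha$ is an essential simple closed curve in $N_g$ and at least one cross-cap is disjoint from $\alpha$, then that cross-cap survives the cut, forcing the cut surface to be non-orientable. Combined with the Euler characteristic and the number of new boundary components, the classification of compact non-orientable surfaces then pins down the cut surface up to homeomorphism.

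For the 1-sided case, let $\alpha$ be a 1-sided essential simple closed curve in $N_g$ that misses at least one cross-cap. A regular neighborhood of $\alpha$ is a Möbius band with a single boundary circle, so cutting along $\alpha$ leaves a surface with exactly one new boundary component. Since $\chi(\M) = 0$, the Euler characteristic is preserved: $\chi(N_g \setminus \alpha) = 2-g$. The cut surface is non-orientable by the disjoint cross-cap observation, so the classification forces it to be $N_{g-1,1}$. For the 2-sided non-separating case, a regular neighborhood of $\alpha$ is an annulus, and cutting along $\alpha$ produces a connected surface (since $\alpha$ is non-separating) with two new boundary components and Euler characteristic $2-g$. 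Again the disjoint cross-cap forces non-orientability, so the cut surface is $N_{g-2,2}$.

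Given any two curves $\alpha_1, \alpha_2$ of the same type (both 1-sided missing a cross-cap, or both 2-sided non-separating missing a cross-cap), we now have cut surfaces of the same homeomorphism type, and thus a homeomorphism $\phi$ between them. The remaining step, which I expect to be the most delicate, is to promote $\phi$ to a homeomorphism of $N_g$ that sends $\alpha_1$ to $\alpha_2$. For this $\phi$ must respect the boundary gluing data that reconstructs $N_g$: in the 1-sided case, the antipodal involution on the boundary circle encoding the re-attachment of the Möbius band neighborhood, and in the 2-sided case, the identification pattern between the two boundary components.

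The plan for this last step is to appeal to the fact that the mapping class group of a compact non-orientable surface acts transitively on parametrizations of a given boundary component (and, in the 2-sided case, transitively on ordered pairs of boundary components, possibly after a swap). Precomposing $\phi$ with such a self-homeomorphism of the cut surface allows us to align the gluing data on the two sides. Once the boundary identifications agree, $\phi$ descends to a homeomorphism of $N_g$ carrying $\alpha_1$ to $\alpha_2$, completing the proof. The main subtlety to watch for is the orientation/involution compatibility on the glued boundary, where the 1-sided case in particular requires that the $\mathbb{Z}/2$ structure be matched rather than just the circle itself.
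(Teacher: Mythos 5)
Your proof takes essentially the same approach as the paper: cut along the curve, observe that a cross-cap disjoint from the curve forces the cut surface to be non-orientable, compute the Euler characteristic and count boundary components, and apply the classification of compact non-orientable surfaces to identify the cut surface as $N_{g-1,1}$ (1-sided case) or $N_{g-2,2}$ (2-sided non-separating case). The paper stops there, leaving the regluing step implicit; you go on to flag the genuinely delicate point that the homeomorphism of cut surfaces must be adjusted to respect the boundary identification data before it descends to a homeomorphism of $N_g$, which is a correct and worthwhile addition rather than a deviation.
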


\begin{proof}
Let $c$ be a curve in $N_g$ that does not pass through at least one cross-cap.
Let $\Sigma$ be the resulting surface after cutting along $c$.  
$\Sigma$ will always be non-orientable since there is a cross-cap in the complement of $c$.
Since Euler Characteristic is preserved when cutting along a simple closed curve, $\Sigma$ has Euler Characteristic $2-g$. 

If $c$ is 1-sided, $\Sigma$ will have 1 boundary component and thus $\Sigma\cong N_{g-1,1}$.

If $c$ is non-separating and 2-sided, $\Sigma$ will have 2 boundary components and thus $\Sigma\cong N_{g-2,2}$.

\end{proof}

For curves that do pass through every cross-cap in the surface, it is difficult to tell the orientability after cutting along the curve. 
The following Proposition describes a type of curve that will always yield an orientable surface after cutting along it.

\begin{prop}\label{prop:111Curve}
Let $c$ be a curve in $N_{g}$ that passes through every cross-cap exactly once. The result of cutting along $c$  yields an orientable surface.
\end{prop}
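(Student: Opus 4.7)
The plan is to prove orientability of the cut surface $\widetilde{N_g}$ by showing it contains no embedded Möbius band. The key observation is that if $\gamma$ is a simple closed curve in the interior of $\widetilde{N_g}$, then a regular neighborhood of $\gamma$ in $N_g$ coincides with a regular neighborhood of $\gamma$ in $\widetilde{N_g}$, so $\gamma$ is $2$-sided in $\widetilde{N_g}$ iff it is $2$-sided in $N_g$. It therefore suffices to show that every simple closed curve $\gamma \subset N_g$ disjoint from $c$ is $2$-sided in $N_g$.

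To establish this I would work in $H_1(N_g;\Z/2)\cong (\Z/2)^g$, which has the core curves $m_1,\dots,m_g$ of the cross-caps as a basis. In this basis the mod-$2$ intersection form is the identity matrix: $[m_i]\cdot[m_i]=1$ because $m_i$ has a Möbius-band neighborhood in $N_g$, while $[m_i]\cdot[m_j]=0$ for $i\ne j$ because distinct cores are disjoint. Since $c$ crosses each $m_i$ in exactly one point, $[c]\cdot[m_i]=1$ for every $i$, which forces $[c]=[m_1]+\cdots+[m_g]$. As recorded in Section~\ref{sec:CurvesInM}, a simple closed curve $\gamma$ in $N_g$ is $1$-sided precisely when it passes through an odd total number of cross-caps, i.e., when $\sum_i|\gamma\cap m_i|\equiv 1\pmod 2$, which is the same as $[\gamma]\cdot[c]\equiv 1\pmod 2$. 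If $\gamma$ is disjoint from $c$, this intersection is $0$, so $\gamma$ is $2$-sided, as required.

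The point that requires the most care is the identity $[c]=\sum[m_i]$ together with the characterization of $1$-sidedness via mod-$2$ intersection with $c$; both rest on mod-$2$ Poincaré duality for closed surfaces, which is valid regardless of orientability, but these homological inputs sit slightly outside the paper's otherwise geometric style. A hands-on alternative more in keeping with the paper is also feasible: writing $N_g=P\cup M_1\cup\cdots\cup M_g$ with $P=S^2\setminus\{g\text{ open disks}\}$, the $g$ arcs of $c\cap P$ form a single cycle on the boundary circles of $P$ and divide $P$ into two planar pieces, while each cut cross-cap $M_i\setminus\beta_i$ is a disk by Proposition~\ref{prop:OneCurveInM} that attaches to $\hat P$ as a band joining those two pieces. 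Orienting the two pieces oppositely, the Möbius twist of $M_i$ is exactly what is needed for every band to attach in an orientation-compatible way, producing a planar and hence orientable cut surface.
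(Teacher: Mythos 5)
Your proof is correct, and it reaches the paper's key reduction the same way: it suffices to show that no simple closed curve disjoint from $c$ is $1$-sided in $N_g$. Where you diverge is in how this is established. The paper proves it geometrically: shrink each cross-cap to a point so that $c$ becomes an embedded closed curve on the sphere and $\beta$ a (possibly non-simple) closed curve meeting $c$ only at the shrunk cross-cap points; the Jordan curve theorem then forces the mod-$2$ count of these meetings to vanish, so $\beta$ passes through an even number of cross-caps. You instead work in $H_1(N_g;\Z/2)$ with the cross-cap cores as an orthonormal basis for the intersection form, deduce $[c]=\sum_i[m_i]$, and read off $1$-sidedness from $[\gamma]\cdot[c]=[\gamma]\cdot[\gamma]$. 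These are really two presentations of the same mod-$2$ intersection computation: your version replaces the slightly informal ``shrink cross-caps and apply Jordan'' with the standard intersection-form machinery, which is cleaner and more clearly invariant under homotopy, at the cost of importing homological input the paper otherwise avoids. Your final sketch --- decomposing $N_g$ as a planar piece together with $g$ Möbius bands and checking that the bands attach orientation-compatibly to the two halves of the cut planar piece --- is a genuinely different, constructive route that verifies orientability of the cut surface directly rather than by ruling out disjoint $1$-sided curves; it is closer in spirit to a normal-form/pattern-of-attachment argument and would be a nice self-contained alternative if spelled out carefully.
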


\begin{proof}

It suffices to show that there are no 1-sided curves in $N_{g}$  disjoint from $c$.
 
Let $\beta$ be a 1-sided curve on the surface disjoint from $c$. 
Since $\beta$ is 1-sided, it must pass through at least one cross-cap. 
Shrinking each cross-cap to a single point, both $c$ and $\beta$ can be viewed as planar curves that intersect at some of the cross-cap points. Since $c$ is a simple closed curve in the plane, by the Jordan curve theorem, $c$ separates the plane into two components. 
The curve $\beta$ may not be simple as it might pass through the same cross-cap point more than once. 
Nevertheless, choosing any orientation of $\beta$ in the plane, the mod 2 intersection count between $\beta$ and $c$ is well defined, and, by the Jordan curve theorem, must be zero. 
Thus $\beta$ must pass through an even number of cross-cap points which implies that $\beta$ must have been a 2-sided curve to begin with. 
This shows there is no 1-sided curve in $N_{g}$ disjoint from $c$.

\end{proof}

There are curves which pass through every cross-cap in a surface, yet the result of cutting along that curve is a non-orientable surface. Figure \ref{fig:CounterExamples} shows examples of three 2-sided curves in $N_4$, each passing through all 4 cross-caps. Two of the curves are related by a change of coordinates, but the third curve is not related to the first two curves.\\

\begin{figure}
\begin{picture}(400,60)
   \put(0,0){\includegraphics[width=400\unitlength]{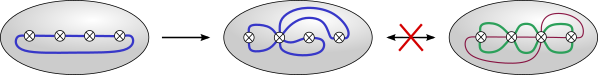}}
    \put(103,43){cross-cap}
     \put(113,30){slide}
     \put(45,7){$c_1$}
     \put(200,5){$c_2$}
     \put(328,38){$c_3$}
    \end{picture}
    \caption{Three non-homotopic 2-sided curves, $c_1$, $c_2$ and $c_3$,  in $N_4$. The curves $c_1$ and $c_2$ are related by a cross-cap slide \cite{S}. There is no homeomorphism of the surface relating the $c_3$ to either of the first two curves because there is a 1-sided curve (the thin red curve) in the complement of the $c_3$. Therefore cutting along $c_3$ yields a non-orientable surface.}
    \label{fig:CounterExamples}
    
\end{figure}

\noindent \textbf{Question:} Is there a combinatorial way to determine the orientability of the resulting surface after cutting along a curve in $N_g$?\\

Combining the results of Propositions \ref{prop:ChangeOfCoor} and \ref{prop:111Curve} with the example from Figure \ref{fig:CounterExamples}, such a curve must pass through every cross-cap, and it could pass through multiple cross-caps multiple times. 
Utilizing cross-cap slides and the mapping class group of the surface \cite{S,P,L63} perhaps a more detailed count of mod 2 cross-cap intersections could be used to determine orientability after cutting along a curve.

\section{Constructing large 1-Systems of Curves}

Recall that a 1-system of curves is a collection of simple, non-peripheral, essential, pairwise non-homotopic curves that pairwise intersect at most 1 time.
Malestein, Rivin, and Theran (MRT) give a construction for a 1-system of curves in an orientable surface of genus $g$ with $g^2+\lfloor\frac 52 g\rfloor+1$ curves  \cite{MRT}. 
This construction gives the largest known lower bound for the maximum number of curves in a 1-system in an orientable surface. 
In this section, we generalize MRT's construction to give a lower bound for the maximum number of curves in a 1-system in a non-orientable surface.
We summarize MRT's construction and show a new inductive step that applies only to non-orientable surfaces.\\

\subsection{MRT's Construction}

To construct a  1-system in an orientable surface of genus $g$ with $b$ boundary components, start with a genus $m=\lfloor \frac g2 \rfloor$ surface, $S_m$.
Choosing a polygonal decomposition of $S_m$ as a $4m$-gon, there are $2m$ simple closed curves connecting opposite sides of the polygon, and one curve connecting the diagonals of the polygon, see Figure \ref{fig:MRTFirst step}. 

\begin{figure}[ht]
    \centering
    \begin{picture}(340,70)
        \put(10,0){\includegraphics[scale=.5]{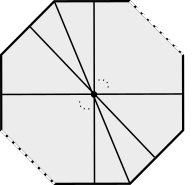}\hspace{1.5cm}}
        \put(-10,0){(a)}
        \put(-5,30){$C_1$}
        \put(150,0){\includegraphics[scale=.5]{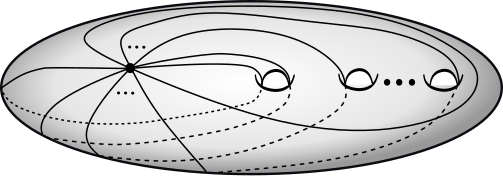}}
        \put(130,0){(b)}
    \end{picture}
    \caption{(a) $2m+1$ curves in the $4m$-gon decomposition of $S_m$. (b) A possible choice for the  $2m+1$ curves in $S_m$ as the start of MRT's construction.}
    \label{fig:MRTFirst step}
\end{figure}

Up to homotopy, these curves can be chosen so they all intersect in the same point, $p_1$. 
This gives a 1-system of curves consisting of $2m+1$ non-homotopic, essential, non-peripheral curves in $S_m$, all of which intersect at one point.
Denote this set of curves by $C_{1}$.\\
\begin{figure}[ht]
    \centering
    \begin{picture}(350,75)
        \put(5,0){\includegraphics[scale=.6]{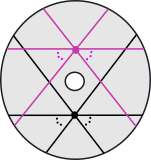}\hspace{.5cm}}
        \put(-5,0){(a)}
        \put(95,5){\includegraphics[scale=.7]{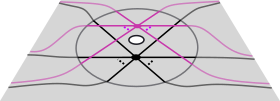}\hspace{.5cm}}
        \put(75,0){(b)}
        \put(250,20){\includegraphics[scale=.5]{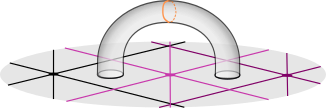}}
        \put(255,0){(c)}
        \put(-5,17){$C_1$}
        \put(35,75){$D$}
        \put(-6,50){\textcolor{magenta}{$C_2$}}
        \put(35,10){$p_1$}
        \put(35,55){$p_2$}
    \end{picture}
    \caption{MRT's Inductive Step.}
    \label{fig:DiscWithShiftedCurves}
\end{figure}

\noindent \textbf{MRT's Inductive Step:} Up to homotopy, choose a small disc neighborhood $D$ of $p_1$ where every curve in $C_1$ is a straight line arc in $D$ through $p_1$ and with no arc in the vertical direction. 
 As shown in Figure \ref{fig:DiscWithShiftedCurves}(a), within the disc, copy and shift the arcs in the vertical direction creating a new set of arcs, $C_2$, which intersect at a new point $p_2$. 
 Remove a disc between $p_1$ and $p_2$ that is disjoint from the arcs to create a new boundary component.
 Inside $D$, every arc in $C_1$ intersects every arc in $C_2$ except the one parallel vertical translate of the arc itself.
 
  Outside of $D$, each arc in $C_2$ can be completed into a simple closed curve by following closely the one parallel vertical translate curve in $C_1$, as shown in Figure \ref{fig:DiscWithShiftedCurves}(b).
  Thus, $C_2$ is a set of curves in $S_{m,1}$ not just a set of arcs in $D$.
  By construction, all of the curves in $C_1 \cup C_2$ are essential, pairwise non-homotopic, non-peripheral and form a 1-system in $S_{m,1}$ with all intersections occurring in the neighborhood $D$. 

Iterating this copy and shift process $2n+b=2(g-m)+b$ times yields a 1-family of curves $C=\bigcup_{i=1}^{2n+b} C_i$ in $S_{m,2n+b}$ with $(2m+1)(2n+b+1)$ curves.
For $n$ pairs of boundary components, glue in a cylinder handle to connect the boundary components, as in Figure \ref{fig:DiscWithShiftedCurves}(c). 
Add to $C$ the meridian curve of each handle.
 This completes MRT's construction and yields a 1-system of curves, $C$, in the closed orientable surface of genus $m+n=g$ and with $b$ boundary components with 
 \begin{equation}\label{eq:SizeOfC}
     |C|=(2m+1)(2n+b+1)+n  \text{ where }n=g-m \text{ and }m=\lfloor \frac{g}{2}\rfloor .
 \end{equation}

\subsection{New Inductive Step}

We offer a different inductive ``copy and shift" step that adds a cross-cap instead of a boundary component.
Start with $C_1$, the set of $2m+1$ curves in $S_m$ that all intersect at 1 point $p_1$.
Choose a small disc neighborhood $D'$ of $p_1$ where every curve in $C_1$ is a straight line through $p_1$ and with no arc in the vertical direction. 
Copy and shift the curves from $C_1$ twice in the vertical direction. Call these new curves $\gamma_{1}$ and $\tilde{C}_{1}$, which have intersection points $g_1$ and $\tilde{p}_1$ as in Figure \ref{fig:NewDisc}.
Cut out a small disc about $g_1$ and glue in a cross-cap. 
This makes every curve in $\gamma_{1}$ a 1-sided curve.
 
Add the core curve of the cross-cap to $\gamma_{1}$.
As a result, in this new disc we have added $2m+1$ 2-sided curves denoted $\tilde{C}_{1}$ and $2m+2$ 1-sided curves in $\gamma_1$.

Just as in MRT's construction, outside of the disc, every curve follows closely to its parallel shifted version. 
Every curve intersects every other curve in the disc except for the 2 parallel shifted versions.
Outside of this disc, all curves have the same intersection properties as the set $C$ from the MRT construction.

\begin{figure}
    \centering
    \begin{picture}(405,150)
   \put(0,0){\includegraphics[scale=.9]{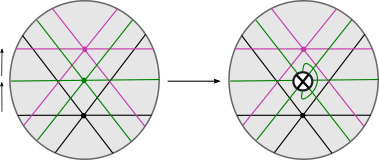}}
   \put(0,0){(a)}
   \put(300,0){(b)}
    \put(300,0){\includegraphics[scale=.8]{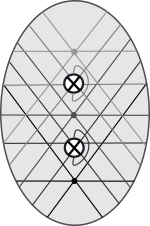}}
    \put(0,23){$C_1$}
    \put(53,18){$p_1$}
     \put(53,44){$g_1$}
     \put(53,83){$\tilde{p}_1$}
     
     \put(252,23){$C_1$}
     \put(255,50){\textcolor{Green}{\Large{ $\mathbf{\gamma_1}$}}}
     \put(252,78){\textcolor{magenta}{$\mathbf{\tilde{C}_1}$}}
     
       \put(200,110){$D'$}
    \end{picture}
    \caption{(a) Constructing curves in $D'$. (b) Iterating the new inductive step two times.}
    \label{fig:NewDisc}
\end{figure}

\begin{claim}
Every curve in $C_1\cup\gamma_1\cup \tilde{C}_1$ is essential in $S_m\# \rpt$. 
\end{claim}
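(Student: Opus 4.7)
The plan is to split the three families of curves according to how each one meets the newly glued cross-cap and to apply a different argument to each.

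Every curve in $\gamma_1$, including the added core curve of the cross-cap, meets the cross-cap exactly once by construction, so Corollary~\ref{cor:1sidedEssential} immediately yields essentiality for all of $\gamma_1$.

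For the 2-sided curves in $C_1 \cup \tilde{C}_1$, the plan is a two-step reduction. First I would verify that each such curve is essential in the orientable surface $S_m$: for $C_1$ this is part of the MRT construction, and for $\tilde{C}_1$ it follows because the vertical-shift-and-follow-a-parallel construction makes each $\tilde{c} \in \tilde{C}_1$ freely isotopic in $S_m$ to its partner in $C_1$. Second, I would promote essentiality from $S_m$ to $S_m \# \rpt$. Since the cross-cap is glued inside a disc $D$ around $g_1$ that is disjoint from each such $c$, we may regard $c$ as living in $S_m \setminus D \subset S_m \# \rpt$. Van Kampen's theorem expresses $\pi_1(S_m \# \rpt)$ as an amalgamated free product $\pi_1(S_m \setminus D) *_{\pi_1(\partial D)} \pi_1(\M)$, and since $\partial D$ represents a non-trivial element of both factors, the inclusion $\pi_1(S_m \setminus D) \hookrightarrow \pi_1(S_m \# \rpt)$ is injective. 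Hence each $c$ remains non-null-homotopic in $S_m \# \rpt$.

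It remains to rule out that $c$ bounds a Möbius band in $S_m \# \rpt$. Since the disc $D$ sits on one side of $c$ inside $S_m$, the curve $c$ separates $S_m \# \rpt$ if and only if it separates $S_m$. If $c$ is non-separating, it bounds no subsurface at all. If $c$ separates $S_m$ into two pieces, essentiality in $S_m$ forces each piece to have positive genus, and after attaching the cross-cap to the appropriate side, a short Euler characteristic computation shows that both complementary pieces in $S_m \# \rpt$ have $\chi \leq -1$, so neither can be a Möbius band (which has $\chi = 0$). The main obstacle is this Euler characteristic bookkeeping in the separating case: it is elementary but requires distinguishing which complementary piece of $c$ inherits the cross-cap.
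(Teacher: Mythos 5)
Your treatment of $\gamma_1$ via Corollary~\ref{cor:1sidedEssential} is exactly the paper's. For the 2-sided curves in $C_1 \cup \tilde{C}_1$, you take a genuinely different route. The paper argues geometrically: every such curve is essential in $S_m$ by MRT's construction, and the only way essentiality could be lost after gluing in the cross-cap is if the curve encircled the cross-cap (bounded a Möbius band), which is ruled out because every curve is non-peripheral to the disc $D'$ where the cross-cap lives. You instead give an algebraic argument for non-null-homotopy (Van Kampen and injectivity of the factors in the amalgamated free product $\pi_1(S_m \setminus D) *_{\pi_1(\partial D)} \pi_1(\M)$) and a separating/non-separating dichotomy with Euler characteristic bookkeeping to rule out bounding a Möbius band. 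Both are correct; your version has the virtue of making explicit the step ``a curve essential in $S_m$ and disjoint from $D$ cannot become null-homotopic after the connect sum,'' which the paper passes over as obvious. The paper's argument is shorter and more in the spirit of the low-tech curve-surgery reasoning used throughout; yours is more self-contained but invokes heavier machinery. One small simplification available to you: the curves in $C_1$ (hence in $\tilde{C}_1$, being isotopic to them in $S_m$) are all non-separating in $S_m$, being standard generators connecting opposite sides of the $4m$-gon, so in this particular construction your separating case never actually occurs and the Euler characteristic computation could be dropped.
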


\begin{proof}
Since every curve in $\gamma_{1}$ passes through a cross-cap exactly once, then every curve in $\gamma_{1}$ is essential by Corollary \ref{cor:1sidedEssential}.

By MRT's construction, every 2-sided curve in $\Gamma$  is essential in $S_m$. 
The only way for one of these curves to become inessential in $S_m\# \rpt$ is if the curve bounds a Möbius band (encircles the cross-cap).
However, through the steps of MRT's construction, every curve was non-peripheral to the newly introduced (and later filled) boundary components.
In our new construction, all of these curves are non-peripheral to the disc $D'$, which implies that none of these curves encircle the cross-cap.

\end{proof}

\begin{claim}
All curves in $C_1\cup\gamma_1\cup \tilde{C}_1$ are pairwise non-homotopic.
\end{claim}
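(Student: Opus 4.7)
The plan is to split the curves of $C_1\cup\gamma_1\cup\tilde C_1$ by sidedness and handle the three resulting comparison problems in turn. First I would observe that every curve in $\gamma_1$ is 1-sided (the previous claim established this: each passes through the cross-cap exactly once, and the core is tautologically 1-sided), while every curve in $C_1\cup\tilde C_1$ is 2-sided, since those curves avoid the cross-cap entirely. Because sidedness is a free-homotopy invariant---equivalently, the mod-$2$ intersection number with the core of the cross-cap is $1$ on $\gamma_1$ and $0$ on $C_1\cup\tilde C_1$---no curve in $\gamma_1$ can be homotopic to any curve in $C_1\cup\tilde C_1$. This reduces the claim to pairwise non-homotopy within each of the two families.

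For two 2-sided curves in $C_1\cup\tilde C_1$, the easy pairs are those that meet transversely in exactly one point: any two curves of $C_1$ meet at $p_1$, any two curves of $\tilde C_1$ meet at $\tilde p_1$, and a curve in $C_1$ together with a non-parallel curve in $\tilde C_1$ cross once inside $D'$ (with no crossings outside, since outside $D'$ each $\tilde C_1$-curve hugs its parallel in $C_1$). A single transverse crossing admits no bigon, so the bigon criterion gives geometric intersection $1$, ruling out homotopy. The delicate pairs are the parallel translates $c_i\in C_1$ and $\tilde c_i\in\tilde C_1$, which are disjoint. Here I would argue by contradiction: if they were homotopic they would cobound an annulus $A\subset S_m\#\rpt$; since an annulus is orientable it cannot contain the cross-cap (a Möbius band), so $A$ lies in the complement of the cross-cap, which is naturally $S_{m,1}$. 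But in $S_{m,1}$, the configuration $C_1\cup\tilde C_1$ is homeomorphic to the configuration $C_1\cup C_2$ produced by one step of MRT's construction (the removed cross-cap disc plays the role of MRT's removed boundary disc), and MRT's argument gives pairwise non-homotopy there. I expect this orientability-plus-MRT reduction to be the main subtlety, because it is the one place we really exploit that the only non-orientability in play comes from the cross-cap we just glued in.

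For two 1-sided curves in $\gamma_1$, I would distinguish them by their geometric intersection numbers with the curves of $C_1$. Index the non-core curves of $\gamma_1$ as $\alpha_1,\ldots,\alpha_{2m+1}$, with $\alpha_i$ matched to $c_i\in C_1$ by the parallel-translate relationship outside $D'$, and let $\alpha_0$ be the core of the cross-cap. Reading off the construction, I would compute $i(\alpha_i,c_j)=1-\delta_{ij}$ for $i\ge 1$ (one transverse crossing in $D'$ when the slopes differ, and parallel-disjoint when $i=j$; no crossings outside $D'$ since $\alpha_i$ hugs $c_i$ there), together with $i(\alpha_0,c_j)=0$ for every $j$ (the core is trapped inside the cross-cap, disjoint from every $c_j$). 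Since geometric intersection is a homotopy invariant, and the $2m+2$ tuples $(i(\alpha_k,c_1),\ldots,i(\alpha_k,c_{2m+1}))$ for $k=0,\ldots,2m+1$ are pairwise distinct, the $\alpha_k$ are pairwise non-homotopic. The only routine check along the way is that the transverse crossing counts are the geometric intersection numbers, which is immediate: a single crossing cannot form a bigon, and disjoint parallel copies already realize intersection number zero.
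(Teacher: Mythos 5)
Your proposal is correct, and at the level of strategy it matches the paper's: both arguments rest on intersection-number comparisons for non-parallel pairs, sidedness to separate $\gamma_1$ from the 2-sided families, and the observation that the cross-cap sits inside the cobounding region between $c_i\in C_1$ and its parallel translate $\tilde c_i\in\tilde C_1$. You organize the casework by sidedness where the paper organizes by ``vertical shift or not,'' and you supply substantially more detail. Two differences are worth flagging. First, your intersection-tuple computation for the $\gamma_1$ curves correctly handles the core curve $\alpha_0$, whereas the paper's recipe (``let $\delta$ be a vertical shift of $\alpha$; then $\beta$ meets $\delta$ exactly once'') silently breaks down when $\beta=\alpha_0$, since $\alpha_0$ is disjoint from every curve in $C_1\cup\tilde C_1$ and hence from every candidate $\delta$. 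Second, your expansion of the paper's terse remark that ``the placement of the cross-cap is in the homotopy annulus'' into an orientability-plus-MRT argument is a genuine improvement in rigor, though the step ``an annulus cannot contain the cross-cap, so $A$ lies in the complement of the cross-cap'' should be tightened: argue instead that $A$ cannot contain the core curve $\alpha_0$ (else $A$ would contain a M\"obius band), and since $\alpha_0$ is a simple closed curve disjoint from $\partial A=c_i\cup\tilde c_i$, it is either contained in $A$ or disjoint from $A$; it must therefore be disjoint, after which $A$ can be isotoped off a M\"obius-band neighborhood of $\alpha_0$ and hence into $S_{m,1}$, where MRT's non-homotopy result applies.
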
 

\begin{proof}

Let $\alpha$ and $\beta$ be two distinct curves in $C_1\cup\gamma_1\cup \tilde{C}_1$ that are not vertical shifts of each other. Let $\delta$ be a vertical shift of $\alpha$. By construction $\beta$ intersects $\delta$ exactly once, while $\alpha$ is disjoint from $\delta$. Intersection number is preserved by homotopy, so $\alpha$ and $\beta$ must be non-homotopic.

Let $\alpha$ and $\beta$ be two distinct curves in $C_1\cup\gamma_1\cup \tilde{C}_1$ that are vertical shifts of each other. If one curve is 1-sided and the other is 2-sided then the curves cannot be homotopic. 
Now, suppose both curves are 2-sided.
If the cross-cap was removed and patched with a disc, then by construction $\alpha$ and $\beta$ would be homotopic. However, the placement of the cross-cap is in the homotopy annulus bounded by $\alpha$ and $\beta$, which ultimately renders the two curves non-homotopic.

\end{proof}

\subsection {Combining MRT and the New Inductive Step.}

To construct a large 1-system of curves in $N_{g,b}$, we first do MRT's construction followed by our new inductive step.
 MRT's construction starts with a choice of $k\in \mathbb{N}$ and converts $2k$ of the $g$ cross-caps into $k$ orientable genus.
MRT's construction gives a 1-system of curves in $S_{k,b}$ called $C$ with $|C|=(2\lfloor \frac{k}{2}\rfloor+1)(2(k-\lfloor \frac{k}{2}\rfloor)+b+1)+(k-\lfloor \frac{k}{2}\rfloor)$ as in Equation \ref{eq:SizeOfC}.
From here, iterate our new inductive step $g-2k$ times which creates new sets of curves $\gamma_1,\tilde{C}_1,\cdots, \gamma_{g-2k},\tilde{C}_{g-2k}$.
Let $\Gamma=C\bigcup_{i=1}^{g-2k} \gamma_{i}\bigcup_{i=1}^{g-2k} \tilde{C}_{i}$ be the full collection of curves from the construction. 
By construction, $\Gamma$ is a 1-system of curves in $S_{k,b}\#^{g-2k}\rpt \cong N_{g,b}$ with 
\begin{equation}\label{eq:sizeOFGamma}
    |\Gamma|=|C|+(g-2k)(4\lfloor \frac{k}{2}\rfloor+3).
\end{equation}

\subsection{Choosing $k$ to maximize $|\Gamma|$}

Letting $b=0$, from Equation \ref{eq:SizeOfC}, we see that
\[
    |C|=(2\lfloor \frac{k}{2}\rfloor+1)(2(k-\lfloor \frac{k}{2}\rfloor)+1)+(k-\lfloor \frac{k}{2}\rfloor)= k^2+\lfloor \frac{5k}{2}\rfloor +1. 
\]

Then, from Equation \ref{eq:sizeOFGamma}, we get that
\[
    |\Gamma|=|C|+(g-2k)(4\lfloor \frac{k}{2}\rfloor+3)= k^2+\lfloor \frac{5k}{2}\rfloor +1 +(g-2k)(4\lfloor \frac{k}{2}\rfloor+3).\\
\]

This quantity is maximized when $k=\lfloor \frac{g}{3}\rfloor$, which gives the largest size of $\Gamma$ with 
\begin{align*}
    |\Gamma|&=\lfloor \frac{g}{3}\rfloor^2+\lfloor \frac{5\lfloor \frac{g}{3}\rfloor}{2}\rfloor +1 +(g-2\lfloor \frac{g}{3}\rfloor)(4\lfloor \frac{g}{6}\rfloor+3)+(2\lfloor \frac{g}{6}\rfloor+1)b\\
    &\geq \frac{1}{3}g^2+\frac{5}{18}g-1 +\frac{b}{3}(g-2).
\end{align*}

\vspace{5mm}

\noindent \textbf{Theorem A.} \textit{There exists a 1-system of curves $\Gamma$ in $N_{g,b}$ with }

\begin{align*}
    |\Gamma|&\geq\frac{1}{3}g^2+\frac{5}{18}g-1 +\frac{b}{3}(g-2).
\end{align*}
\qed

\subsection{Constructing a large 1-system of 1-sided curves.}

To construct a large 1-system of 1-sided curves in $N_{g}$, choose a $k\in \mathbb{N}$ and convert $2k$ of the $g$ cross-caps into $k$ orientable genus.
There is a collection of curves $C_1$ in $S_k$ that intersect at exactly one point just as in the start of MRT's construction.
Do the new inductive step $g-2k$ times, but only add in the 1-sided curves in the sets $\gamma_i$. 
Each set $\gamma_i$ contains $2k+2$ curves.
Then $\Omega=\bigcup_{i=1}^{g-2k}\gamma_i$ is a 1-system of only 1-sided curves in $S_{k}\#^{g-k}\rpt$ with 
\[|\Omega|=(g-2k)(2k+2).\]

This is maximized when $k=\lfloor \frac{g}{4}\rfloor$.\\

\noindent \textbf{Theorem B.} \textit{
There exists a 1-system $\Omega$ of 1-sided curves in $N_g$ with }

$$|\Omega|=(g-2\lfloor \frac{g}{4}\rfloor)(2\lfloor \frac{g}{4}\rfloor+2)\geq \frac{1}{4}(g^2+3g+2).$$\qed

\bibliographystyle{plain}
\bibliography{NonOrientableCurves}
\end{document}